\newlist{assumptions}{description}{1}
\setlist[assumptions]{
  font=\bfseries,
  labelwidth=2.8cm,    
  leftmargin=2cm,    
  align=parleft,
  itemsep=0.5\baselineskip
}
\newcommand{\ton}[1]{\left(#1\right)}
\newcommand{\abs}[1]{\left|#1\right|}
\newcommand{\AVR}{\operatorname{AVR}(g)}
\newcommand{\snk}{\operatorname{sn_\kappa}}
\newcommand{\ak}{\alpha_\kappa}
\DeclareMathOperator{\Ric}{Ric}
\def\dfrac{\displaystyle\frac}
\newtheorem{prop}{Proposition}[section]
\newtheorem{thm}[prop]{Theorem}
\newtheorem{lem}[prop]{Lemma}
\newtheorem{rem}[prop]{Remark}
\numberwithin{equation}{section}
\begin{document}
	
	
	\baselineskip=17pt
	
	
	\title[$p$-Laplacian]{On the Bossel-Daners inequality for the $p$-Laplacian on complete Riemannian manifolds}
	
	\author{Daguang Chen, Shan Li, Yilun Wei}
	
	\thanks{The authors were supported by NSFC grant No. 11831005 and NSFC-FWO 11961131001.}
	
	\subjclass[2010]{{35J92}, {35P30}, {53C24}, {58C40}}
	\keywords{Bossel-Daners; Isoperimetric inequality; Robin Laplacian; Eigenvalue}

	

\begin{abstract}	
	In this paper, we obtain the Bossel-Daners inequality for the first eigenvalue of the
	 $p$-Laplacian with Robin boundary conditions on 
	 complete Riemannian manifolds with lower Ricci curvature bounds. 
	 Furthermore, we demonstrate that the Bossel-Daners inequality extends to compact submanifolds within complete Riemannian manifolds characterized by positive asymptotic volume ratio and non-negative intermediate Ricci curvature.

\end{abstract}

\maketitle

\section{Introduction}

Let $(M,g)$ be an $n$-dimensional complete Riemannian manifold, and let $\Omega\subseteq M$ be a bounded domain with a smooth boundary. For $1<p<\infty$ and $\beta>0$, we denote the first eigenvalue of the $p$-Laplacian with Robin boundary conditions on $\Omega$ by $\lambda_{p}(\Omega,\beta)$, defined as follows
\begin{equation}\label{eq}
	\left\{
	\begin{array}{ll}
		-\Delta_p u=\lambda_{p}(\Omega,\beta) \abs{u}^{p-2} u,  & \text{in}~ \Omega,\\\\
		\abs{\nabla u}^{p-2} \dfrac{\partial u}{\partial N}+\beta\abs{u}^{p-2}u=0, & \text{on} ~ \partial\Omega,
	\end{array}
	\right.
\end{equation}  
where $N$ denotes the unit outward normal of $\partial\Omega$ and $\Delta_p u =\operatorname{div}\left(\abs{\nabla u}^{p-2}\nabla u\right)$. When $p=2$, Bossel \cite{Bossel86} and Daners \cite{Daners06} proved that, in Euclidean space,
\begin{equation}\label{ineq: BD}
	\lambda_{2}(\Omega,\beta)\geq \lambda_{2}\left(\Omega^\sharp,\beta\right),
\end{equation}
where $\Omega^\sharp$ denotes a ball with the same volume as $\Omega$. In 2007, Daners and Kennedy \cite{DK08} showed that equality holds in \eqref{ineq: BD} if and only if $\Omega$ is isometric to a ball. In 2022, Cheng, Li and the first author \cite{CCL22} proved the Bossel-Daners inequality for $n$-dimensional compact Riemannian manifolds with $\Ric\geq n-1$ and $n$-dimensional hyperbolic space. 
In 2023, Li, Wei and the first author \cite{CLW23} established
the Bossel-Daners inequality for 
complete non-compact Riemannian manifolds with $\Ric\geq 0$. Alvino, Nitsch and Trombetti \cite{ANT23} used Talenti comparison theorem to give an alternative approach to inequality \eqref{ineq: BD} for $2$-dimensional Euclidean spaces. For $p>1$, Bucur and Daners \cite{BD10}, Dai and Fu \cite{DF11} independently proved the Bossel-Daners inequality for the $p$-Laplacian in Euclidean space. These results were subsequently generalized to anisotropic $p$-Laplacian in \cite{DG14,DP24}, and to Hermite operator by Chiacchio and Gavitone in \cite{CG22}. For more insights into the eigenvalue problems with Robin boundary conditions, consult works like \cite{ACH20,Laugesen19,LW21,LWW23,Savo20,CW25}, among others.

In the first part, we focus on $n$-dimensional complete Riemannian manifolds with the following assumptions 
\begin{assumptions}
    \item[Assumption I] 
    \begin{itemize}
        \item[(i)] $(M,g)$ is a complete non-compact Riemannian manifold with $\Ric \geq 0$ and positive asymptotic volume ratio
        \begin{equation*}
		\AVR =\lim\limits_{r\rightarrow +\infty} \dfrac{\abs{B_r}}{\omega_n r^n}>0,
	\end{equation*}
	where $\abs{B_r}$ denotes the volume of geodesic ball in $(M,g)$ with radius $r$, and $\omega_n$ denotes the volume of $n$-dimensional Euclidean ball.
	\item[(ii)] $(M,g)$ is the $n$-dimensional hyperbolic space $\mathbb{H}^n$ with constant sectional curvature $-1$.
   \end{itemize}
    \item[Assumption II] $(M,g)$ is a compact Riemannian manifold and satisfies $\Ric\geq(n-1)\kappa$, where $\kappa=0$, $1$ or $-1$. 
\end{assumptions}
The corresponding model spaces $(M_\kappa, g_\kappa)$ are defined as follows 
\begin{equation}\label{(M_kappa,g_kappa)}
	\left(M_\kappa, g_\kappa\right) =
	\left\{
	\begin{array}{ll}
	\ton{\mathbb{R}^n,g_0}, & \text{for Assumption I (i) }, \\
\ton{\mathbb{H}^n,g_{-1}}, & \text{for Assumption I (ii) }, \\
\ton{\mathbb{S}^n(R_{\kappa}),\tilde{g}}, & \text{for Assumption II },
	\end{array}
	\right.
\end{equation}
where $\tilde{g}$ represents the induced metric of $\mathbb{R}^{n+1}$, and $R_\kappa$ is defined by
\begin{equation}\label{R_kappa}
	R_\kappa =\left\{
	\begin{array}{ll}
		\dfrac{d}{\left(1+n\int_{0}^{\pi}{\sin^{n-1}\theta \,d\theta}\right)^{\frac{1}{n}}-1}, & \text{if}~\kappa=0,\\
		1, & \text{if}~\kappa=1,\\
		\dfrac{1}{C\left(d\right)}, & \text{if}~\kappa=-1,
	\end{array}
	\right.
\end{equation} 
where the diameter of $(M,g)$ is denoted by $d$, and $C(z)$ is the unique positive solution to equation
\begin{equation*}
	x\int_0^z{ \left(\cosh t+x \sinh t\right)^{n-1} \,dt}=\int_0 ^\pi{\sin^{n-1} \theta \,d\theta}.
\end{equation*}
Furthermore, we introduce the definition 
\begin{equation}\label{alpha_kappa}
	\ak=
	\left\{
	\begin{array}{ll}
		\AVR, &  \text{for Assumption I (i)},\\
		1, & \text{for Assumption I (ii)},\\
		\dfrac{\abs{M}}{\abs{M_\kappa}}, & \text{for Assumption II}.
	\end{array}
	\right.
\end{equation}
In these manifolds, the Bossel-Daners inequality for the $p$-Laplacian holds, as stated in Theorem \ref{thm: 1}.

\begin{thm}\label{thm: 1}

	Let $\Omega$ be a bounded domain with smooth boundary in $(M,g)$. Suppose that $1<p<\infty$ and $\beta>0$.
	\begin{itemize}[leftmargin=12pt]
	
		\item  For Assumption I, we have
		\begin{equation*}
			\lambda_{p}(\Omega,\beta)\geq \lambda_{p}\left(\Omega^\sharp,\beta\right),
		\end{equation*}
		where $\Omega^\sharp$ is the geodesic ball in $\left(M_\kappa,g_\kappa\right)$ satisfying $\abs{\Omega}=\ak \abs{\Omega^\sharp}$. Moreover, equality holds if and only if $(M,g)$ is isometric to $\left( M_\kappa, g_\kappa\right)$ and $\Omega$ is isometric to a geodesic ball in $(M_\kappa,g_\kappa)$.

		\item For Assumption $II$, we have
		\begin{equation*}
			\lambda_{p}(\Omega,\beta)\geq \lambda_{p}\left(\Omega^\sharp,\beta\right),
		\end{equation*}
		where $\Omega^\sharp$ is the geodesic ball in $\left(M_\kappa,g_\kappa\right)$ satisfying $\abs{\Omega}=\ak \abs{\Omega^\sharp}$. Moreover, for $\kappa=1$, equality holds if and only if $(M,g)$ is isometric to $\left( M_\kappa, g_\kappa\right)$ and $\Omega$ is isometric to a geodesic ball in $(M_\kappa,g_\kappa)$.
	\end{itemize}
\end{thm}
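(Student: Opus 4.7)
The plan is to extend the Bossel-Daners level-set machinery used for $p=2$ in \cite{CCL22,CLW23} to the quasi-linear case, combining a Bossel-type slice inequality with the sharp isoperimetric inequality appropriate to each assumption. First, I would begin from the variational characterization
\[
\lambda_p(\Omega,\beta) = \inf\left\{ \frac{\int_\Omega |\nabla u|^p \, dv_g + \beta \int_{\partial\Omega} |u|^p \, dA_g}{\int_\Omega |u|^p \, dv_g} : u \in W^{1,p}(\Omega),\ u \not\equiv 0 \right\},
\]
and fix a first eigenfunction $u$, which may be taken strictly positive in $\Omega$ by V\'azquez's strong maximum principle for $p$-harmonic functions. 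The super-level sets $U_t = \{u > t\}$ provide a foliation of $\Omega$, and the coarea formula rewrites both the numerator and the denominator of the Rayleigh quotient as one-parameter integrals over the slices $\partial U_t \cap \Omega$.

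Second, I would derive a slice inequality of Bossel type. Following \cite{BD10,DF11} for the Euclidean $p$-Laplacian, Young's inequality with exponents $p$ and $p/(p-1)$ applied to the cross term $p\,u^{p-1}\langle F,\nabla u\rangle$, together with the divergence theorem, yields for any vector field $F$ on $\Omega$
\[
\int_\Omega |\nabla u|^p + \beta \int_{\partial\Omega} u^p \;\geq\; \int_\Omega \bigl[-(p-1)|F|^{p/(p-1)} - \operatorname{div} F\bigr] u^p + \int_{\partial \Omega}\bigl(\langle F, N\rangle + \beta\bigr) u^p.
\]
Choosing $F$ as the vector field generated by the radial first eigenfunction on the model ball $\Omega^\sharp$, transported along the foliation $\{\partial U_t\}$, reduces the problem to a family of inequalities, parametrized by $t$, that involve only the volume $|U_t|$ and the outer boundary measure $|\partial U_t \cap \partial\Omega|$.

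Third, I would invoke the sharp isoperimetric inequality on $(M,g)$: Brendle's weighted inequality (with the Agostiniani-Fogagnolo-Mazzieri rigidity) under Assumption I(i); the classical inequality in $\mathbb{H}^n$ under I(ii); and the L\'evy-Gromov inequality under Assumption II. In each case one obtains a lower bound on $\Per(U_t)_g$ in terms of the perimeter of the geodesic ball of volume $|U_t|/\ak$ in the model $(M_\kappa, g_\kappa)$. Inserting this bound into the slice inequality and integrating in $t$ reproduces the Rayleigh quotient of the radial eigenfunction on $\Omega^\sharp$, yielding $\lambda_p(\Omega,\beta) \geq \lambda_p(\Omega^\sharp,\beta)$.

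Finally, for the rigidity, equality throughout forces equality in the isoperimetric inequality at almost every level $t$. Under Assumption I(i) this forces $(M,g)$ to be isometric to Euclidean space and each $U_t$ to be a metric ball, whence $\Omega$ itself is; the hyperbolic case and the spherical ($\kappa=1$) case are analogous. The exclusion of $\kappa = 0,-1$ under Assumption II reflects the absence of a sharp rigidity statement for L\'evy-Gromov in those model geometries. The main obstacle I expect is the low regularity of $p$-harmonic eigenfunctions, which are only $C^{1,\alpha}$ and whose critical sets may carry positive measure; to make the coarea and slice arguments rigorous, one must either restrict to regular levels via Sard's theorem or approximate the problem by a uniformly elliptic regularization and pass to the limit, and this is the most delicate step in adapting the linear arguments of \cite{CCL22,CLW23} to the general quasi-linear setting.
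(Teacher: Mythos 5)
Your overall strategy---level sets of the eigenfunction, a Bossel-type comparison functional, transplantation of the radial model eigenfunction, and the sharp isoperimetric inequalities (Brendle under Assumption I(i), the hyperbolic inequality under I(ii), B\'erard--Besson--Gallot under Assumption II)---is the same as the paper's. However, the engine of the argument is mis-specified and, as written, has a genuine gap. The divergence/Young inequality you state is valid for Lipschitz vector fields $F$, and it yields the sharp bound only if the chosen field satisfies a pointwise estimate of the form $-\operatorname{div}F-(p-1)|F|^{p/(p-1)}\ge \lambda_p(\Omega^\sharp,\beta)$ in $\Omega$ together with $\langle F,N\rangle+\beta\ge 0$ on $\partial\Omega$. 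The field transported along the foliation $\{\partial U_t\}$ is not Lipschitz, its divergence is only a measure concentrated on level sets of $u$, and ``integrating in $t$'' does not reproduce the model Rayleigh quotient. What actually closes the argument in Bossel's method, and in the paper, is the dichotomy lemma for the functional $H_\Omega$: for every bounded $\varphi$ there exists a level $\tilde t$ with $\lambda_p(\Omega,\beta)\ge H_\Omega(\tilde t,\varphi)$, proved via the weighted differential inequality in $t$ (Lemma \ref{E geq H}). The comparison at that single level then rests on three facts your proposal never establishes: the transplanted test function is equimeasurable with $\abs{\nabla v}/v$, so the bulk terms match up to the factor $\alpha_\kappa$ (Lemma \ref{varphi equiv psi}); the level-wise identity $H_B\bigl(t,\abs{\nabla v}/v\bigr)=\lambda_p(\Omega^\sharp,\beta)$ holds for all $t$; and, crucially, $\abs{\nabla v}/v\le \beta^{1/(p-1)}$ on the model ball (obtained in the paper from the monotonicity of $\abs{v'}/v$ via a Gr\"onwall argument, Lemma \ref{monotonicity equiv upper bound}), without which the Robin term $\beta\abs{\partial U_t^e}$ cannot be absorbed into the full perimeter before the isoperimetric inequality is applied. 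These are the $p$-Laplacian analogues of the steps in Bucur--Daners and Dai--Fu and must be redone on the curved model spaces; they are not a mere regularity technicality, and the $C^{1,\alpha}$ issue you flag is not the delicate point.

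The rigidity part is also incomplete. Equality in the eigenvalue inequality does not immediately give equality in the isoperimetric inequality at almost every level: one must first invoke the equality statement of the dichotomy lemma to conclude $\abs{\nabla u}/u=\varphi$ a.e., hence $H_\Omega(t,\varphi)=\lambda_p(\Omega,\beta)$ for a.e.\ $t$, and only then does the chain of inequalities force $\abs{\partial U_t}=\alpha_\kappa\abs{\partial B^\kappa_{r_\kappa(t)}}$. Moreover, knowing that each $U_t$ is isometric to a geodesic ball does not by itself give that $\Omega$ is a ball: one must still prove the balls are concentric, which in the paper is an explicit computation on $\mathbb{S}^n$ (differentiating the position vector of $\partial\Psi(U_t)$ in $t$), with the remaining cases taken from \cite{CW25,AC23}. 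Your ``whence $\Omega$ itself is'' skips this step.
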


\begin{rem}
The Bossel-Daners inequality for the $p$-Laplacian in Euclidean space was established in \cite{BD10,DF11}. 
When $p=2$, Theorem~\ref{thm: 1} reduces to the results in \cite{Bossel86,Daners06,CCL22,CLW23}.
\end{rem}

In the second part of this paper, we shift our focus to Riemannian manifolds with non-negative intermediate Ricci curvature. In Riemannian manifolds $(M,g)$, at some point $p\in M $, given a unit tangent vector $X\in T_pM$ and a $k$-dimensional subspace $V\subseteq T_p M$ such that $X$ is orthogonal to $V$ at $p$, the intermediate $k$-Ricci curvature of $(X,V)$ is defined by
\begin{equation*}
	\Ric_k(X,V)=\dfrac{1}{k} \sum_{i=1}^k{\left\langle R(e_i,X)e_i, X \right\rangle},
\end{equation*}
where $R$ denotes the curvature tensor and $\left\{e_i\right\}$ represents an orthonormal basis of $V$. $(M,g)$ is said to have non-negative $k$-Ricci curvature if $\Ric_k(X,V)\geq 0$ for all $X$ and $V$ at every point of $(M,g)$. The notion of $k$-Ricci curvature was introduced by Bishop and Crittenden in \cite{BC64}, and was further studied in \cite{Galloway81,KM18,MW24,Mouille22,RW25,Wang24,Wu87}. We now consider $(n+m)$-dimensional complete, non-compact Riemannian manifolds $(M,g)$  with positive asymptotic volume ratio and non-negative $k$-Ricci curvature, where $k=\min \left\{n-1,m-1\right\}$. Denote 
\begin{equation}\label{theta_{m,n}}
	\theta_{m,n} =
	\begin{cases}
		\operatorname{AVR}(g), & m=1~\text{or}~2,\\
		\operatorname{AVR}(g)\cdot\dfrac{(n+m)\omega_{n+m}}{m\omega_m\omega_n}, & m> 2.
	\end{cases}
\end{equation}
Using arguments analogous to these in Theorem \ref{thm: 1}, it can be established that the Bossel-Daners inequality also holds for compact minimal submanifolds in $(M,g)$ as stated below.

\begin{thm}\label{thm: 2}
Let $(M,g)$ be an $(n+m)$-dimensional complete, non-compact Riemannian manifold with positive asymptotic volume ratio and non-negative $k$-Ricci curvature, where $k=\min \{n-1,m-1\}$. Suppose $\Sigma$ is an $n$-dimensional compact minimal submanifold in $(M,g)$ with boundary $\partial \Sigma$. If $1<p< \infty $ and $\beta>0$, then
	\begin{equation*}
		\lambda_{p}(\Sigma,\beta)\geq \lambda_{p}\left(D,\beta\right),
	\end{equation*}
	where $D$ is an $n$-dimensional flat disk in $(\mathbb{R}^{n+m},g_0)$ satisfying $\abs{\Sigma}=\theta_{m,n} \abs{D}$. Furthermore, when $m=1$ or 2, equality holds if and only if $(M,g)$ is isometric to $(\mathbb{R}^{n+m},g_0)$ and $\Sigma$ is isometric to a flat disk in $(\mathbb{R}^{n+m},g_0)$.

\end{thm}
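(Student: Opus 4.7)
The plan is to follow the Bossel--Daners scheme used in proving Theorem~\ref{thm: 1} under Assumption~I(i), replacing the ambient AVR isoperimetric inequality on $(M,g)$ with the sharp submanifold isoperimetric inequality available under non-negative intermediate $k$-Ricci curvature.

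Concretely, I would take the positive first eigenfunction $u$ of \eqref{eq} on $\Sigma$, set $U_t=\{u>t\}$, and test the $p$-Laplacian equation against the characteristic function of $U_t$. Combined with the Robin boundary condition on $\partial\Sigma$, this yields the identity
\[
\lambda_{p}(\Sigma,\beta)\int_{U_t} u^{p-1}\,dV_g = \int_{\Sigma\cap \partial U_t}|\nabla u|^{p-1}\,d\mathcal{H}^{n-1} + \beta\int_{\partial\Sigma\cap U_t}u^{p-1}\,d\mathcal{H}^{n-1}.
\]
Applying the coarea formula and Hölder's inequality to the interior boundary term, exactly as in the Assumption~I(i) part of Theorem~\ref{thm: 1}, reduces the problem to an ordinary differential inequality for the distribution function $\mu(t)=|U_t|$, whose leading coefficient is a power of $\Per(U_t;\Sigma\setminus\partial\Sigma)$.

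The crucial external input is the sharp isoperimetric inequality for $n$-dimensional compact minimal submanifolds in $(n+m)$-dimensional ambient spaces of non-negative $k$-Ricci curvature and positive asymptotic volume ratio, namely
\[
\Per(E;\Sigma\setminus\partial\Sigma) \geq n\bigl(\theta_{m,n}\,\omega_n\bigr)^{1/n}|E|^{(n-1)/n}
\]
for every measurable $E\subset \Sigma\setminus\partial\Sigma$, with $\theta_{m,n}$ as in \eqref{theta_{m,n}}. This is Brendle's inequality in the sectional-curvature setting and its extension to intermediate Ricci curvature in the range $k=\min\{n-1,m-1\}$ by the works cited in the introduction (e.g.\ \cite{Wang24,MW24}). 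Substituting this lower bound into the ODI and integrating produces a direct comparison with the model $p$-torsion problem on a flat disk $D\subset\mathbb{R}^{n+m}$ of volume $\theta_{m,n}^{-1}|\Sigma|$; the Bossel--Daners variational characterization on $D$ then gives $\lambda_{p}(\Sigma,\beta)\geq \lambda_{p}(D,\beta)$.

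The main obstacle is twofold. First, one must confirm that the submanifold isoperimetric inequality holds with precisely the sharp constant $\theta_{m,n}$ defined in \eqref{theta_{m,n}}, including the borderline dimensions $m\in\{1,2\}$ where $\theta_{m,n}$ reduces to $\AVR$; sharpness of this constant is what makes the final comparison sharp, and it is exactly the point where the hypothesis $k=\min\{n-1,m-1\}$ is used. Second, the rigidity claim when $m\in\{1,2\}$ requires tracing equality through both the isoperimetric step (forcing $(M,g)\cong(\mathbb{R}^{n+m},g_0)$ and $\Sigma$ totally geodesic, hence a flat disk) and the Hölder/Bossel--Daners chain, where once one is reduced to Euclidean space the equality analysis of \cite{BD10,DF11,DK08} applies verbatim.
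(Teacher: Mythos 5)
There is a genuine gap: the mechanism you describe is not the one that makes the Robin problem work, and as described it would fail. After writing the level-set identity
\begin{equation*}
\int_{\partial U_t^i}\abs{\nabla u}^{p-1}\,d\mu_g+\beta\int_{\partial U_t^e}u^{p-1}\,d\mu_g=\lambda_p(\Sigma,\beta)\int_{U_t}u^{p-1}\,dV_g,
\end{equation*}
you propose to run coarea plus H\"older and derive an ODI for $\mu(t)=\abs{U_t}$ whose coefficient is a power of the perimeter, ``exactly as in'' Theorem~\ref{thm: 1}. But Theorem~\ref{thm: 1} is not proved that way, and the Talenti-type ODI route does not go through for Robin conditions: the isoperimetric inequality controls the full perimeter $\abs{\partial U_t}=\abs{\partial U_t^i}+\abs{\partial U_t^e}$, whereas the identity above only carries the gradient over the \emph{interior} part $\partial U_t^i$ together with a separate, uncontrolled boundary piece $\partial U_t^e\subset\partial\Sigma$; one cannot close an inequality in $\mu(t)$ alone. (This obstruction is precisely why \cite{ANT23} only recovers the $p=2$, two-dimensional case by symmetrization.) The paper's actual argument is Bossel's: one transplants the radial quantity $\psi=\abs{\nabla v}/v$ of the model eigenfunction on $D$ to $\Sigma$ via $\varphi=\psi(r_\kappa(u))$, invokes Lemma~\ref{E geq H} to get $\lambda_p(\Sigma,\beta)\geq H_\Sigma(\tilde t,\varphi)$ for some level $\tilde t$, and then uses the gradient bound $\psi\leq\beta^{1/(p-1)}$ of Lemma~\ref{monotonicity equiv upper bound} to absorb $\beta\abs{\partial U_{\tilde t}^e}\geq\varphi^{p-1}\abs{\partial U_{\tilde t}^e}$, so that the interior and exterior boundary terms recombine into $\varphi^{p-1}\abs{\partial U_{\tilde t}}$, to which the submanifold isoperimetric inequality \eqref{isoperi1} is finally applied, yielding $H_\Sigma(\tilde t,\varphi)\geq H_B(\cdot,\psi)=\lambda_p(D,\beta)$ by \eqref{E=H: radial}. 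Your proposal omits both the transplanted test function and the bound $\psi\leq\beta^{1/(p-1)}$, which are the two ingredients that make the Robin boundary term tractable; the reference to a ``model $p$-torsion problem'' is also out of place here.

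You do correctly identify the one genuinely new input relative to Theorem~\ref{thm: 1}, namely the sharp isoperimetric inequality for compact minimal submanifolds under non-negative $k$-Ricci curvature with the constant $\theta_{m,n}$ of \eqref{theta_{m,n}} (Ma--Wu \cite{MW24}, see also \cite{Wang24}), and that its rigidity for $m\in\{1,2\}$ drives the equality case. However, the equality analysis is not merely ``apply \cite{BD10,DF11,DK08} verbatim once in Euclidean space'': the paper first upgrades equality to $\abs{\nabla u}/u=\varphi$ a.e.\ via the second part of Lemma~\ref{E geq H}, deduces that almost every superlevel set is an isoperimetric region, and then must still show the resulting balls are concentric before concluding that $\Sigma$ is a flat disk; this concentricity step is nontrivial and is handled by the argument following \eqref{rigid} (via \cite{CW25,AC23}).
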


\begin{rem}
For $\beta=\infty$ and $p = 2$, equation \eqref{eq} can be seen as the problem equipped with the Dirichlet boundary condition; the inequality in Theorem \ref{thm: 2} is also referred to as the Faber-Krahn inequality. In this case, Wei and the first author \cite{CW23} established the Faber-Krahn inequality for submanifolds in manifolds with non-negative sectional curvature.
\end{rem}

The paper is  organized as follows. In Section \ref{Preliminaries}, we review the isoperimetric inequality for various types of manifolds. In Section \ref{The H-functional}, the functional $H_\Omega$ is introduced, and its relation with the first eigenvalue of \eqref{eq} is established. In Section \ref{Analysis in radial cases}, we deal with the radial cases in  model spaces. Finally, the proofs of Theorems \ref{thm: 1} and \ref{thm: 2} are presented in Section \ref{Proof of Theorem 1.1}.

\section{Preliminaries}\label{Preliminaries}

\subsection{Isoperimetric inequality in manifolds with lower Ricci curvature bounds}
Let $(M,g)$ be an $n$-dimensional Riemannian manifold satisfying either Assumption $\mathrm{I}$ or $\mathrm{II}$. Suppose $\Omega\subseteq M$ is a bounded domain with a smooth boundary and let $\Omega^\sharp\subseteq M_\kappa$ be a geodesic ball satisfying $|\Omega|=\ak|\Omega^\sharp|$, where $\ak$ is defined in \eqref{alpha_kappa}. The isoperimetric inequality then holds as follows 
\begin{equation}\label{isoperi}
	\abs{\partial \Omega}\geq \ak \abs{\partial \Omega^\sharp}.
\end{equation}
Moreover, for Assumption I and for Assumption II with $\kappa=1$, equality in \eqref{isoperi} holds if and only if $(M,g)$ is isometric to $(M_\kappa,g_\kappa)$, and $\Omega$ is isometric to a geodesic ball in $(M_\kappa,g_\kappa)$.

For non-compact Riemannian manifolds $(M,g)$, Assumption I (i)  was established by Brendle in \cite{Brendle23} (see also \cite{AFM20,BK23}), while  Assumption I (ii) was verified by Schmidt in \cite{Schmidt43} (see also \cite{Chavel84}).
For compact Riemannian manifolds $(M,g)$ with lower Ricci curvature bounds, B\'{e}rard, Besson and Gallot \cite{BBG85} showed that $(M,g)$ admits the isoperimetric inequality, which improves the result by L{\'e}vy and Gromov in \cite{Gromov07}.

\subsection{Isoperimetric inequality in manifolds with non-negative intermediate Ricci curvature}

Let $(M,g)$ be an $(n+m)$-dimensional complete, non-compact Riemannian manifold with positive asymptotic volume ratio and non-negative $k$-Ricci curvature, where $k=\min\{n-1,m-1\}$. Suppose $\Sigma$ is an $n$-dimensional compact minimal submanifold in $(M,g)$ with boundary $\partial \Sigma$ and let $D$ be an $n$-dimensional flat disk in $(\mathbb{R}^{n+m},g_0)$ satisfying $\abs{\Sigma}=\theta_{m,n} \abs{D}$, where $\theta_{m,n}$ is defined in \eqref{theta_{m,n}}. Then, the following inequality holds
\begin{equation}\label{isoperi1}
	\abs{\partial \Sigma}\geq \theta_{m,n} \abs{\partial D}.
\end{equation}
Moreover, for $m=1$ or $2$, equality in \eqref{isoperi1} holds if and only if $(M,g)$ is isometric to $(\mathbb{R}^{n+m},g_0)$ and $\Sigma$ is isometric to a flat disk in $(\mathbb{R}^{n+m},g_0)$. 

The isoperimetric inequality \eqref{isoperi1} was proved by Ma and Wu in \cite{MW24} by Alexandrov-Bakelman-Pucci method (see also \cite{Wang24}).

\section{The functional $H_\Omega$}\label{The H-functional}

Let $(M,g)$ be an $n$-dimensional complete Riemannian manifold, and let $\Omega\subseteq M$ be a bounded domain with a smooth boundary. For $1<p<\infty$ and $\beta>0$, the eigenfunction $u$ associated with $\lambda_{p}(\Omega,\beta)$ is positive and satisfies \eqref{eq}. Consequently, for all $\varphi \in W^{1,p}(\Omega)$, the following holds,
\begin{equation}\label{eq: weak}
	\int_\Omega{\abs{\nabla u}^{p-2} \left\langle \nabla u, \nabla \varphi\right\rangle  \,dV_g}+\beta \int_{\partial\Omega}{u^{p-1} \varphi \,d\mu_g}=\lambda_{p}(\Omega,\beta) \int_\Omega{u^{p-1} \varphi \,dV_g}.
\end{equation}
For $\varphi\in L^\infty(\Omega)$, we define the  functional $H_\Omega(t,\varphi)$ as in \cite{Bossel86} (see also \cite{Daners06,BD10,DF11,CCL22,CW25}), 
\begin{equation*}
	H_\Omega(t,\varphi)=\dfrac{1}{\abs{U_t}}\left(\int_{\partial U_t^i}{\abs{\varphi}^{p-1} \,d\mu_g}-(p-1)\int_{U_t}{\abs{\varphi}^p \,dV_g}+\beta\abs{\partial U_t^e}\right),
\end{equation*}
where
\begin{equation*}
	U_t=\left\{x\in \Omega: ~u(x)>t \right\},~\partial U_t^i =\partial U_t\cap \Omega,~\partial U_t^e =\partial U_t\cap \partial\Omega.
\end{equation*}
Assuming $||u||_{L^\infty(\Omega)}=1$, 
the following lemmas show the relations between $\lambda_{p}(\Omega,\beta)$ and the functional $H_\Omega$.
\begin{lem}\label{E=H}
Let $u > 0$ be the eigenfunction associated with $\lambda_{p}(\Omega,\beta)$ as defined in \eqref{eq}. Then
	\begin{equation}\label{eq: E=H}
		\lambda_{p}(\Omega,\beta)=H_\Omega\left(t,\dfrac{\abs{\nabla u}}{u}\right)
	\end{equation}
for almost every $t\in(0,1)$.
\end{lem}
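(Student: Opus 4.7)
The plan is to derive \eqref{eq: E=H} by computing $H_\Omega(t,\abs{\nabla u}/u)$ directly, using the divergence theorem on the super-level set $U_t$ together with the eigenvalue equation $\Delta_p u = -\lambda_p(\Omega,\beta)\,u^{p-1}$ and the Robin boundary condition. The key auxiliary object will be the weight $u^{-(p-1)}$, which is smooth and bounded on $\overline{U_t}\cap\Omega$ because $u\ge t>0$ there.

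First I would fix a regular value $t\in(0,1)$ of $u$; by Sard's theorem this is legitimate for almost every $t$. For such $t$, the interior part $\partial U_t^i=\{u=t\}\cap\Omega$ is a smooth hypersurface on which $\abs{\nabla u}>0$, so combined with the standard $C^{1,\alpha}$ regularity of the positive eigenfunction $u$ up to $\partial\Omega$, the set $U_t$ has the boundary regularity required to apply the divergence theorem. Second, I would expand
\begin{equation*}
\operatorname{div}\ton{u^{-(p-1)}\abs{\nabla u}^{p-2}\nabla u} = u^{-(p-1)}\Delta_p u - (p-1)\,u^{-p}\abs{\nabla u}^{p},
\end{equation*}
substitute $\Delta_p u=-\lambda_p(\Omega,\beta)\,u^{p-1}$, and integrate over $U_t$. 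The left-hand side becomes a boundary integral that splits across $\partial U_t^i$ and $\partial U_t^e$: on $\partial U_t^i$ the outward unit normal of $U_t$ is $-\nabla u/\abs{\nabla u}$ and $u\equiv t$, so that contribution is $-\int_{\partial U_t^i}(\abs{\nabla u}/u)^{p-1}\,d\mu_g$; on $\partial U_t^e\subseteq\partial\Omega$ the outward normal of $U_t$ coincides with $N$, and the Robin condition $\abs{\nabla u}^{p-2}\partial u/\partial N=-\beta u^{p-1}$ reduces the integrand to the constant $-\beta$, contributing $-\beta\abs{\partial U_t^e}$.

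Collecting these contributions and dividing by $\abs{U_t}$ reproduces precisely the three terms in the definition of $H_\Omega(t,\abs{\nabla u}/u)$, and the left-hand side becomes $\lambda_p(\Omega,\beta)$ because $u^{-(p-1)}\cdot u^{p-1}=1$. The only delicate point, rather than a real obstacle, is justifying the divergence theorem on $U_t$ for almost every $t$; this is handled by the $C^{1,\alpha}$ regularity of $u$ and Sard's theorem, exactly as in the Euclidean treatments in \cite{BD10,DF11} and the Riemannian treatments in \cite{CCL22,CLW23,CW25}. Alternatively, one can reach the same identity by plugging a smooth truncation of $u^{-(p-1)}\chi_{U_t}$ into the weak formulation \eqref{eq: weak} and passing to the limit, which bypasses any regularity issue at $\partial U_t^i$.
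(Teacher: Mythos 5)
Your divergence-theorem computation with the vector field $u^{1-p}\abs{\nabla u}^{p-2}\nabla u$ on $U_t$ is exactly the integration-by-parts argument the paper invokes for this lemma (and defers to the Euclidean references for), and the resulting identity
\begin{equation*}
-\lambda_{p}(\Omega,\beta)\abs{U_t}-(p-1)\int_{U_t}\ton{\dfrac{\abs{\nabla u}}{u}}^{p}\,dV_g=-\int_{\partial U_t^i}\ton{\dfrac{\abs{\nabla u}}{u}}^{p-1}\,d\mu_g-\beta\abs{\partial U_t^e}
\end{equation*}
checks out term by term. The one caveat is that classical Sard's theorem does not apply to a merely $C^{1,\alpha}$ eigenfunction, but your alternative route through the weak formulation \eqref{eq: weak} with a truncation of $u^{1-p}\chi_{U_t}$ is precisely how the cited treatments make the level-set argument rigorous, so the proof stands as essentially the same as the paper's.
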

Using integration by parts, we can easily verify the equation \eqref{eq: E=H}. The detailed proof of Lemma \ref{E=H} is the same as the proof presented in \cite{BD10,DF11}.
\begin{lem}\label{E geq H}
	For any $\varphi\in L^\infty(\Omega)$, there exists $\tilde{t}\in \left(0,1\right)$ such that
	\begin{equation*}
		\lambda_{p}(\Omega,\beta)\geq H_\Omega\left(\tilde{t},\varphi\right).
	\end{equation*} 
	Moreover, if for all $t\in \left(0,1\right)$,
	\begin{equation*}
		\lambda_{p}(\Omega,\beta)\leq H_\Omega(t,\varphi),
	\end{equation*}
	then 
	\begin{equation*}
		\frac{\abs{\nabla u}}{u}=\varphi, \quad \text{almost everywhere in}~\Omega.
	\end{equation*}
	
\end{lem}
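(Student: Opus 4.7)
The plan is to convert the pointwise claim into an integrated inequality via the coarea formula, then close the argument using the weak formulation of the eigenvalue equation together with Young's inequality.

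For the first part, I would argue by contradiction: suppose $\lambda_{p}(\Omega,\beta) < H_\Omega(t,\varphi)$ for almost every $t \in (0,1)$. Multiplying both sides by the positive weight $p t^{p-1}|U_t|$ and integrating over $(0,1)$, I rewrite each term. Fubini (layer cake) gives
\[
\int_0^1 p t^{p-1}|U_t|\,dt = \int_\Omega u^p\,dV_g, \qquad \int_0^1 p t^{p-1}\int_{U_t}|\varphi|^p\,dV_g\,dt = \int_\Omega u^p|\varphi|^p\,dV_g,
\]
and $\int_0^1 p t^{p-1}|\partial U_t^e|\,dt = \int_{\partial\Omega} u^p\,d\mu_g$; the coarea formula gives
\[
\int_0^1 p t^{p-1}\int_{\partial U_t^i}|\varphi|^{p-1}\,d\mu_g\,dt = \int_\Omega p\, u^{p-1}|\varphi|^{p-1}|\nabla u|\,dV_g.
\]
Testing the weak formulation \eqref{eq: weak} with $u$ itself yields $\int_\Omega |\nabla u|^p\,dV_g + \beta\int_{\partial\Omega} u^p\,d\mu_g = \lambda_{p}(\Omega,\beta)\int_\Omega u^p\,dV_g$, so the boundary term cancels and the assumed strict inequality reduces to
\[
\int_\Omega \bigl[|\nabla u|^p + (p-1) u^p |\varphi|^p - p\, u^{p-1}|\varphi|^{p-1}|\nabla u|\bigr]\,dV_g < 0.
\]
However, Young's inequality $a^p + (p-1) b^p \ge p\, a\, b^{p-1}$, applied with $a = |\nabla u|$ and $b = u|\varphi|$, shows the integrand is pointwise non-negative, a contradiction; this produces the desired $\tilde t$.

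For the second (rigidity) part, the same integration under the hypothesis $\lambda_{p}(\Omega,\beta) \le H_\Omega(t,\varphi)$ for all $t \in (0,1)$ produces the non-strict version of the display above, which forces the non-negative integrand to vanish almost everywhere. Since equality in Young's inequality holds iff $a = b$, I obtain $|\nabla u| = u|\varphi|$ a.e. in $\Omega$, equivalently $|\nabla u|/u = \varphi$ a.e. (consistent with the absolute values appearing in the definition of $H_\Omega$). The main technical concern throughout is justifying the coarea formula on the super-level sets $U_t$, specifically identifying $\partial U_t^i$ with the smooth level set $\{u = t\}\cap\Omega$ for a.e.\ $t$; this follows from a standard Sard-type argument combined with interior regularity of positive $p$-Laplacian eigenfunctions, and is handled exactly as in \cite{BD10,DF11}.
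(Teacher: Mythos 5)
Your proposal is correct, and it reaches the same integrated inequality as the paper by a genuinely different (and arguably cleaner) route. The paper first invokes Lemma \ref{E=H} to write $\lambda_{p}(\Omega,\beta)-H_\Omega(t,\varphi)=H_\Omega(t,|\nabla u|/u)-H_\Omega(t,\varphi)$, applies the same Young-type inequality $a^p+(p-1)b^p\ge pab^{p-1}$ level set by level set, rewrites the resulting volume term via coarea as $\int_t^{+\infty}\int_{\partial U_s^i}(\cdot)\,d\mu_g\,\tfrac{ds}{s}$, and then recognizes the right-hand side as the total derivative $-t^{1-p}\tfrac{d}{dt}\bigl(t^p\int_t^{+\infty}\cdots\bigr)$, whose integral over $(0,1)$ vanishes. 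You instead integrate $pt^{p-1}\abs{U_t}\bigl(\lambda_{p}(\Omega,\beta)-H_\Omega(t,\varphi)\bigr)$ directly, convert every term to an integral over $\Omega$ (or $\partial\Omega$) by layer cake and coarea, and cancel the $\lambda$ and $\beta$ terms using the weak formulation \eqref{eq: weak} tested with $u$; Young's inequality then makes the surviving integrand pointwise non-negative. This bypasses both Lemma \ref{E=H} and the total-derivative identity \eqref{3.7}--\eqref{3.8}, and your rigidity argument (forcing the non-negative integrand to vanish a.e., hence equality in Young, hence $|\nabla u|=u|\varphi|$) is more direct than the paper's contradiction argument for small $t$. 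The trade-off is that the paper's level-set formulation keeps the information "for almost every $t$" available term by term, which it reuses later; your global version loses that granularity but is sufficient for the lemma as stated. Two shared, minor caveats: the conclusion as literally derived is $|\nabla u|/u=|\varphi|$ (so $=\varphi$ requires $\varphi\ge 0$, which holds in the application and is implicit in the paper as well), and the identification of $\partial U_t^i$ and $\partial U_t^e$ with the level sets $\{u=t\}\cap\Omega$ and $\{u>t\}\cap\partial\Omega$ for a.e.\ $t$ needs the Sard-type justification you flag, exactly as in \cite{BD10,DF11}.
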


\begin{proof}
	
	Lemma \ref{E=H} implies that, for almost every $t\in (0,1)$,
	\begin{equation}\label{3.7}
		\begin{aligned}
			\abs{U_t}\left(\lambda_{p}(\Omega,\beta)-H_\Omega(t,\varphi)\right)&=\abs{U_t}\left(H_\Omega\left(t,\dfrac{\abs{\nabla u}}{u}\right)-H_\Omega(t,\varphi)\right)\\
			&=\int_{\partial U_t^i}{\left(\left(\dfrac{\abs{\nabla u}}{u}\right)^{p-1}-\abs{\varphi}^{p-1}\right) \,d\mu_g}-(p-1)\int_{U_t}{\left(\left(\dfrac{\abs{\nabla u}}{u}\right)^p-\abs{\varphi}^p\right) \,dV_g}\\
			&\geq \int_{\partial U_t^i}{\left(\left(\dfrac{\abs{\nabla u}}{u}\right)^{p-1}-\abs{\varphi}^{p-1}\right) \,d\mu_g}-p\int_{U_t}{\dfrac{\abs{\nabla u}}{u} \left(\left(\dfrac{\abs{\nabla u}}{u}\right)^{p-1}-\abs{\varphi}^{p-1}\right) \,dV_g}\\
			&=\int_{\partial U_t^i}{\left(\left(\dfrac{\abs{\nabla u}}{u}\right)^{p-1}-\abs{\varphi}^{p-1}\right) \,d\mu_g}-p\int_{t}^{+\infty}{\int_{\partial U_s^i}{ \left(\left(\dfrac{\abs{\nabla u}}{u}\right)^{p-1}-\abs{\varphi}^{p-1}\right) \,d\mu_g} \, \dfrac{ds}{s}}\\
			&=-t^{1-p}\dfrac{d}{dt}\left(t^p\cdot  \int_{t}^{+\infty}{\int_{\partial U_s^i}{ \left(\left(\dfrac{\abs{\nabla u}}{u}\right)^{p-1}-\abs{\varphi}^{p-1}\right) \,d\mu_g} \, \dfrac{ds}{s}}\right),
		\end{aligned}
	\end{equation}
that is,
	\begin{equation}\label{3.8}
		t^{p-1}\abs{U_t}\left(\lambda_{p}(\Omega,\beta)-H_\Omega(t,\varphi)\right)\geq-\dfrac{d}{dt}\left(t^p\cdot  \int_{t}^{+\infty}{\int_{\partial U_s^i}{ \left(\left(\dfrac{\abs{\nabla u}}{u}\right)^{p-1}-\abs{\varphi}^{p-1}\right) \,d\mu_g} \, \dfrac{ds}{s}}\right).
	\end{equation}
	Integrating both sides of \eqref{3.8} over $\left(0,1\right)$, we obtain
	\begin{equation*}
		\int_{0}^1{t^{p-1}\abs{U_t}\left(\lambda_{p}(\Omega,\beta)-H_\Omega(t,\varphi)\right) \,dt}\geq 0.
	\end{equation*}
	Thus, there exists some $\tilde{t}\in \left(0,1\right)$ such that $\lambda_{p}(\Omega,\beta)\geq H_\Omega\left(\tilde{t},\varphi\right)$.
	
	To prove the rest part, we assume by contradiction that $\abs{\left\{\dfrac{\abs{\nabla u}}{u}\neq \varphi\right\}}> 0$. Thus, for sufficiently small $t$,
	\begin{equation*}
		\abs{\left\{\dfrac{\abs{\nabla u}}{u}\neq \varphi\right\}\cap U_t}> 0.
	\end{equation*}
	Revisiting the derivation in \eqref{3.7}, for sufficiently small $t$,
	\begin{equation*}
		\abs{U_t}\left(\lambda_{p}(\Omega,\beta)-H_\Omega(t,\varphi)\right)>-t^{1-p}\dfrac{d}{dt}\left(t^p\cdot  \int_{t}^{+\infty}{\int_{\partial U_s^i}{ \left(\left(\dfrac{\abs{\nabla u}}{u}\right)^{p-1}-\abs{\varphi}^{p-1}\right) \,d\mu_g} \, \dfrac{ds}{s}}\right).
	\end{equation*}
	Therefore, 
	\begin{equation*}
		\int_{0}^1{t^{p-1}\abs{U_t}\left(\lambda_{p}(\Omega,\beta)-H_\Omega(t,\varphi)\right) \,dt}> 0,
	\end{equation*}
	which implies the existence of $\tilde{t} \in \left(0,1\right)$ such that $\lambda_{p}(\Omega,\beta)>H_\Omega\left(\tilde{t},\varphi\right)$. This result contradicts the fact that $\lambda_{p}(\Omega,\beta)\leq H_\Omega(t,\varphi)$ for all $t\in (0,1)$.

\end{proof}

\section{Properties of the eigenfunction and radially symmetric function on geodesic balls}\label{Analysis in radial cases}

Let $B_R^\kappa$ denote the geodesic ball in $\left(M_\kappa,g_\kappa\right)$ with radius $R$. We may further assume $R<\pi R_\kappa$ when $(M,g)$ is a compact manifold, where $R_\kappa$ is defined in \eqref{R_kappa}. For $1<p<\infty$ and $\beta>0$, the eigenfunction $v>0$ associated with  $\lambda_{p}\left(B_R^\kappa,\beta\right)$ is the radial solution to the following equation 
\begin{equation}\label{eq: radial}
	\left\{
	\begin{array}{ll}
		-\Delta_p v= \lambda_{p}\left(B_R^\kappa,\beta\right) \abs{v}^{p-2}v,  & \text{in}~ B_R^\kappa,\\\\
		\abs{\nabla v}^{p-2} \dfrac{\partial v}{\partial N}+\beta\abs{v}^{p-2}v=0, & \text{on} ~ \partial B_R^\kappa,
	\end{array}
	\right.
\end{equation}
which is equivalent to
\begin{equation}\label{eq: radial-1 dim}
	\left\{
	\begin{array}{ll}
		-\abs{v'}^{p-2}\left[(p-1)v''+(n-1)\dfrac{\zeta_\kappa'}{\zeta_\kappa}v'\right]=\lambda_{p}\left(B_R^\kappa,\beta\right) \abs{v}^{p-2}v,  & \text{in}~ [0,R),\\\\
		\abs{v'(R)}^{p-2}v'(R)+\beta\abs{v(R)}^{p-2} v(R)=0,~v'(0)=0,
	\end{array}
	\right.
\end{equation}
where
\begin{equation}\label{zeta_kappa}
	\zeta_\kappa(r) =\left\{
	\begin{array}{ll}
		\snk(r), & \text{for Assumption I},\\
		R_\kappa\sin\left(R_\kappa^{-1} r\right), & \text{for Assumption II},
	\end{array}
	\right.
\end{equation}
and 
\begin{equation*}\label{sn_kappa}
	\snk(r) =\left\{
	\begin{array}{ll}
		r, & \text{if}~\kappa=0,\\
		\sin r, & \text{if}~\kappa=1,\\
		\sinh r, & \text{if}~\kappa=-1.
	\end{array}
	\right.
\end{equation*}
Therefore, for all $0<r\leq R$,
\begin{equation}\label{eig: radial}	
	\lambda_{p}\left(B_R^\kappa,\beta\right)=\dfrac{1}{\abs{B_r^\kappa}}\left(\int_{\partial B_r^\kappa}{\left(\dfrac{\abs{\nabla v}}{v}\right)^{p-1} \,d\mu_{g_\kappa}}-(p-1)\int_{B_r^\kappa}{\left(\dfrac{\abs{\nabla v}}{v}\right)^p \,dV_{g_\kappa}} \right).	
\end{equation}
Note that $v'(0)=0$. By the equation \eqref{eq: radial-1 dim}, we have
\begin{align*}
    \dfrac{\partial}{\partial r}\ton{\zeta_\kappa^{n-1}(r)\abs{v^\prime(r)}^{p-2}v^\prime(r)}=-\lambda_p\ton{B_R^\kappa,\beta}v^{p-1}\ton{r}\zeta_\kappa^{n-1}(r)<0,
\end{align*}
which implies that $v^\prime(r)<0$ for $r\in(0,R]$, that is, $v\ton{r}$ is strictly decreasing with respect to $r$.

Assuming $||v||_{L^\infty\left(B_R^\kappa\right)}=1$, Lemma \ref{E=H} indicates that equation \eqref{eq: E=H} holds in $\left(M_\kappa,g_\kappa\right)$ for almost every $t\in (0,1)$. Moreover, for all $t\in\ton{0,1}$,
	\begin{equation}\label{E=H: radial} 
		\lambda_{p}\left(B_R^\kappa,\beta\right)=H_B\left(t,\dfrac{\abs{\nabla v}}{v}\right),
	\end{equation}
where $H_B$ is the functional with respect to $v$.
Additionally, we analyze the monotonicity of $\dfrac{\abs{\nabla v}}{v}$ and provide an upper bound for this quantity in $B_R^\kappa$ as follows.

\begin{lem}\label{monotonicity equiv upper bound}
Let $v\ton{r}> 0$ be the eigenfunction associated with $\lambda_{p}\left(B_R^\kappa,\beta\right)$ as defined in \eqref{eq: radial}. If $f\ton{r}=\dfrac{\abs{v^\prime\ton{r}}}{v\ton{r}}$ and $1<p<\infty$, then
\begin{equation*}
    f'\ton{r}>0,\qquad \text{for all $r\in\ton{0,R}$, }
\end{equation*}
and
\begin{equation*}
		f\ton{r}\leq \beta^{\frac{1}{p-1}},\qquad\text{for all $r\in(0,R]$.}
	\end{equation*}

\end{lem}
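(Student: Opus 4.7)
The plan is to establish both assertions in a single sweep. First I note that the Robin condition in \eqref{eq: radial-1 dim} pins down $f(R)=\beta^{1/(p-1)}$, while $v'(0)=0$ gives $f(0)=0$, so the upper bound $f\leq\beta^{1/(p-1)}$ will be an automatic consequence of the monotonicity $f'>0$ on $(0,R)$. The central step is therefore to prove monotonicity.

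To this end I will convert the radial $p$-Laplace equation into a first-order ODE for $f$ itself. Using $v'<0$ on $(0,R]$ together with the identities $-v'/v=f$ and $-v''/v=f'-f^2$, dividing \eqref{eq: radial-1 dim} by $v^{p-1}$ should yield
\[
(p-1)f^{p-2}(r)f'(r)=\lambda_{p}\left(B_R^\kappa,\beta\right)+(p-1)f^p(r)-(n-1)\frac{\zeta_\kappa'(r)}{\zeta_\kappa(r)}f^{p-1}(r).
\]
Differentiating this identity at any hypothetical interior critical point $r_0\in(0,R)$ kills every explicit $f'$-term and leaves
\[
(p-1)f^{p-2}(r_0)f''(r_0)=-(n-1)\left(\frac{\zeta_\kappa'}{\zeta_\kappa}\right)'(r_0)\,f^{p-1}(r_0).
\]
Across the three model spaces the ratio $\zeta_\kappa'/\zeta_\kappa$ equals $1/r$, $R_\kappa^{-1}\cot(r/R_\kappa)$, or $\coth r$ respectively, and a direct check shows it is strictly decreasing on the relevant interval (where the standing assumption $R<\pi R_\kappa$ is used in the compact case $\kappa=1$). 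Combined with $f(r_0)>0$ (which holds because $v'<0$ on $(0,R]$), this forces $f''(r_0)>0$, so every interior critical point of $f$ is a strict local minimum.

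I will then rule out critical points altogether by contradiction: if $r_1\in(0,R)$ were the smallest critical point, then by the previous step $r_1$ is a strict local minimum, so $f$ decreases strictly on a left neighborhood of $r_1$, and the maximum of $f$ on $[0,r_1]$ must be attained at some $r_2\in(0,r_1)$ (it cannot lie at $0$, since $f(0)=0<f(r_1)$). But then $f'(r_2)=0$, contradicting the minimality of $r_1$. Hence $f'$ is nowhere zero on $(0,R)$, and being continuous with $f(0)<f(R)$ it must be strictly positive; the upper bound then follows from monotonicity and the boundary value $f(R)=\beta^{1/(p-1)}$. The main obstacle I expect is the algebraic derivation of the ODE for $f$ — keeping the signs straight given $v'<0$ and handling the $p$-dependent exponents in $(-v')^{p-2}v''/v^{p-1}=f^{p-2}(f^2-f')$ — after which the second-derivative test is entirely routine and reduces to the strict monotonicity of $\zeta_\kappa'/\zeta_\kappa$, a property the paper already relies on elsewhere.
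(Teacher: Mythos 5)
Your proof is correct in substance but follows a genuinely different route from the paper's. The paper converts \eqref{eq: radial-1 dim} into a first-order ODE for $f$ (when $1<p\le 2$) or for $h=f^{p-1}$ (when $p>2$), differentiates once more to obtain a differential inequality of the form $f''>Ff'$ (resp.\ $h''>Gh'$), and then applies Gr\"onwall's inequality together with the computed initial values $f'(0)\ge 0$, $h'(0)>0$ to propagate positivity of $f'$ outward from the origin. Your second-derivative test at a hypothetical critical point, followed by a first-critical-point/interior-maximum contradiction, reaches the same conclusion, and both arguments ultimately rest on the same identity $\left(\zeta_\kappa'/\zeta_\kappa\right)'=-1/\zeta_\kappa^2<0$, which is exactly the source of the strict term $\frac{n-1}{p-1}\frac{1}{\zeta_\kappa^2}\,f$ in the paper's computation of $f''$. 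What your version buys: it treats all $1<p<\infty$ uniformly (the paper must split cases because $f^{2-p}$ is singular at $r=0$, where $f=0$, when $p>2$, whereas you only ever evaluate your identity at interior points where $f>0$), and it does not require computing $f'(0)$ or $h'(0)$. Two small points to shore up: (i) the existence of a \emph{smallest} critical point is not automatic for a relatively closed subset of the open interval $(0,R)$; the cleanest fix is to observe that your argument already produces a contradiction at \emph{any} critical point $r_0$ --- the maximum of $f$ over $[0,r_0]$ is attained at an interior point $r_2$ (since $f(0)=0<f(r_0)$ and $f$ decreases strictly just left of $r_0$), so $r_2$ is itself a critical point, hence a strict local minimum, contradicting its maximality over $[0,r_0]$; (ii) the strict inequality $f''(r_0)>0$ uses $n\ge 2$ (as does the paper's strict inequality $f''>Ff'$) together with the smoothness of $f$ near interior points, which holds because $v'<0$ there renders \eqref{eq: radial-1 dim} non-degenerate.
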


\begin{rem}
The monotonicity of 
$f(r)$ in Euclidean space was established in \cite{BD10,DF11}, and this monotonicity was extended to space forms in \cite{LW21}. By Gr$\ddot{o}$nwall's inequality, we give an alternative proof of the monotonicity of $f(r)$.
\end{rem}

\begin{proof}
We prove the lemma in the cases of $1<p\leq 2$ and $2<p<\infty$, respectively.

For $1<p\leq 2$, \eqref{eq: radial-1 dim} yields
	\begin{equation}\label{f'}
		\begin{aligned}
			f'&=-\frac{v''}{v}+\left(\frac{v'}{v}\right)^2=\frac{\lambda_{1,\beta,p}\left(B_R^\kappa\right)}{p-1}f^{2-p}-\frac{n-1}{p-1}\frac{\zeta_\kappa'}{\zeta_\kappa} f+f^2.
		\end{aligned}
	\end{equation}
	Since $f>0$ in $(0,R)$, for all $r\in (0,R)$, one has
	\begin{equation*}
		\begin{aligned}
			f''&=\lambda_{1,\beta,p}\left(B_R^\kappa\right)\frac{2-p}{p-1}f^{1-p}f'+\frac{n-1}{p-1}\dfrac{1}{\zeta_\kappa^2} f-\dfrac{n-1}{p-1}\frac{\zeta_\kappa'}{\zeta_\kappa} f'+2ff'\\
			&>\ton{\lambda_{1,\beta,p}\left(B_R^\kappa\right)\dfrac{2-p}{p-1}f^{1-p}-\dfrac{n-1}{p-1}\dfrac{\zeta_\kappa'}{\zeta_\kappa} +2f}f',
		\end{aligned}
	\end{equation*}
	that is,
	\begin{align}\label{f''}
	      f''>Ff', ~~\text{in}~\ton{0,R},
	  \end{align}
   where $F=\lambda_{1,\beta,p}\left(B_R^\kappa\right)\dfrac{2-p}{p-1}f^{1-p}-\frac{n-1}{p-1}\frac{\zeta_\kappa'}{\zeta_\kappa} +2f$. 
	Besides, \eqref{f'} implies
	\begin{equation}\label{f^'(0)}
		\left\{
		\begin{array}{ll}
			f'(0)= 0, & \quad\text{for}~1<p<2,\\
			f'(0)= \dfrac{\lambda_{1,\beta,p}\left(B_R^\kappa\right)}{n}, & \quad \text{for}~p=2.
		\end{array}
		\right.
	\end{equation}
	By Gr$\ddot{\mathrm{o}}$nwall's inequality, together with \eqref{f''} and \eqref{f^'(0)}, it follows that
	\begin{equation*}
	    f'(r)>f'(0)~\exp\ton{-\int_0^rF(s)ds}\geq 0,\qquad \text{for all $r\in\ton{0,R}$}.
	\end{equation*}
	Meanwhile, noticing $f(0)=0$ and $f(R)=\beta^{\frac{1}{p-1}}$. Thus
$f(r)\leq f(R)=\beta^{\frac{1}{p - 1}}$ in $(0,R]$.

	For $2<p<\infty$, define $h=f^{p-1}=\ton{\dfrac{\abs{v'}}{v}}^{p-1}$. By \eqref{eq: radial-1 dim}, we obtain
	\begin{equation}\label{h'}
	    h'=\lambda_{1,\beta,p}\left(B_R^\kappa\right)-\ton{n-1}\frac{\zeta'_\kappa}{\zeta_\kappa}h+\ton{p-1}h^{\frac{p}{p-1}},
	\end{equation}
	and thus $h'(0)=\dfrac{\lambda_{1,\beta,p}\left(B_R^\kappa\right)}{n}$. Differentiating the equation  \eqref{h'} leads to
	\begin{align*}
	    h''=\ton{n-1}\dfrac{1}{\zeta_\kappa^2}h+\ton{ph^{\frac{1}{p-1}}-(n-1)\frac{\zeta'_\kappa}{\zeta_\kappa}}h'>\ton{ph^{\frac{1}{p-1}}-(n-1)\frac{\zeta'_\kappa}{\zeta_\kappa}}h',
	\end{align*}
	that is,
	$$h''>Gh', ~~\text{in}~\ton{0,R},$$
	where $G=ph^{\frac{1}{p-1}}-(n-1)\dfrac{\zeta'_\kappa}{\zeta_\kappa}$. Due to Gr$\ddot{\mathrm{o}}$nwall's inequality, one can infer that for any $r\in(0,R)$,
	$$h'(r)>h'(0)\exp\ton{-\int_0^rG(s)ds}>0.$$
	Thus, $f'>0$ and $f(r)<\beta^{\frac{1}{p-1}}$ for $r\in(0,R)$. The proof of Lemma \ref{monotonicity equiv upper bound} is  complete.
\end{proof}

Let $\Omega$ be a bounded domain with a smooth boundary in $(M,g)$ satisfying $\abs{\Omega}= \ak\abs{B_R^\kappa}$, where $\ak$ is defined in \eqref{alpha_kappa},  and let $\psi\geq 0$ be a  radially symmetric function in $B_R^\kappa$. Define $\varphi:\Omega\rightarrow \mathbb{R}$ by
\begin{equation}\label{varphi}
	\varphi(q) =\psi(r_\kappa(u(q))),
\end{equation}
where $r_\kappa(t) =I_\kappa^{-1}\left(\ak^{-1}\abs{U_t}\right)$,  $I_\kappa(r) =n\omega_n\int_0^r{\zeta_\kappa^{n-1}(w) \,dw}$, and $\zeta_\kappa$ is defined in \eqref{zeta_kappa}.
We now establish the relationships between $\varphi$ and $\psi$.

\begin{lem}\label{varphi equiv psi}
For all $t, l\geq 0$,
	\begin{equation*}
		\abs{U_t\cap \left\{\varphi>l\right\}}=\ak\abs{B_{r_\kappa(t)}^\kappa\cap \left\{\psi>l\right\}}.
	\end{equation*}
	Consequently, for every $p> 0$,
	\begin{equation*}
		||\varphi||_{L^p\left(U_t\right)}^p=\ak||\psi||_{L^p\left(B_{r_\kappa(t)}^\kappa\right)}^p.
	\end{equation*}
\end{lem}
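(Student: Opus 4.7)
The plan is to reduce the first identity to a one-dimensional change of variables driven by the definition of $r_\kappa$, after which the $L^p$ identity follows from the layer cake formula. Since $\psi$ is radially symmetric, writing $A_l:=\{r\in[0,R]:\psi(r)>l\}$ gives $\{\psi>l\}\cap B_R^\kappa=\{x:|x|\in A_l\}$, and polar coordinates in $(M_\kappa,g_\kappa)$ express
\begin{equation*}
    \ak\abs{B_{r_\kappa(t)}^\kappa\cap\{\psi>l\}} = \ak\int_0^{r_\kappa(t)} \chi_{A_l}(r)\,I_\kappa'(r)\,dr.
\end{equation*}
On the $\Omega$-side, $\varphi(q)=\psi(r_\kappa(u(q)))$ gives $\{\varphi>l\}=\{q\in\Omega:r_\kappa(u(q))\in A_l\}$, hence
\begin{equation*}
    \abs{U_t\cap\{\varphi>l\}} = \int_\Omega \chi_{\{u>t\}}\,\chi_{A_l}(r_\kappa(u))\,dV_g.
\end{equation*}

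The next step is to rewrite the latter integral via the distribution function $\mu(s):=\abs{U_s}$, so that the layer cake representation yields
\begin{equation*}
    \abs{U_t\cap\{\varphi>l\}} = -\int_t^{\norm{u}_{L^\infty(\Omega)}} \chi_{A_l}(r_\kappa(s))\,d\mu(s).
\end{equation*}
The defining relation $\ak I_\kappa(r_\kappa(s))=\mu(s)$ gives $d\mu(s)=\ak I_\kappa'(r_\kappa(s))\,dr_\kappa(s)$, and since $\mu$ is non-increasing, the map $s\mapsto r_\kappa(s)$ is non-increasing and sends $t\mapsto r_\kappa(t)$ and $\norm{u}_{L^\infty(\Omega)}\mapsto 0$. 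Substituting $r=r_\kappa(s)$ and reversing the orientation then gives $\abs{U_t\cap\{\varphi>l\}}=\ak\int_0^{r_\kappa(t)}\chi_{A_l}(r)\,I_\kappa'(r)\,dr$, which matches the first display and proves the set identity.

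For the $L^p$ consequence, I would insert the set identity into the layer cake formulas
\begin{equation*}
    \norm{\varphi}_{L^p(U_t)}^p = \int_0^\infty p\,l^{p-1}\,\abs{U_t\cap\{\varphi>l\}}\,dl,\qquad \norm{\psi}_{L^p(B_{r_\kappa(t)}^\kappa)}^p = \int_0^\infty p\,l^{p-1}\,\abs{B_{r_\kappa(t)}^\kappa\cap\{\psi>l\}}\,dl,
\end{equation*}
and the claim follows at once. The main technical delicacy I foresee is justifying the change of variables: one needs $\mu$ absolutely continuous on $[0,\norm{u}_{L^\infty(\Omega)}]$ so that $d\mu(s)$ can be pushed forward via $r_\kappa$ onto $\ak I_\kappa'(r)\,dr$. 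This follows from Sard's theorem applied to the smooth eigenfunction $u$ together with the coarea formula, so I do not anticipate a serious obstacle.
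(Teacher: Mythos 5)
Your proof is correct and follows essentially the same route as the paper: polar coordinates on the model side, the coarea/layer-cake representation of $\abs{U_t\cap\{\varphi>l\}}$ on the domain side, the substitution $r=r_\kappa(s)$ driven by the defining relation $\ak I_\kappa(r_\kappa(s))=\abs{U_s}$, and finally the layer cake formula for the $L^p$ identity. The paper merely makes the substitution explicit by first computing $r_\kappa'(t)=-\ak^{-1}\,I_\kappa'(r_\kappa(t))^{-1}\int_{\partial U_t^i}\abs{\nabla u}^{-1}\,d\mu_g$ from the coarea formula, which is the same step you carry out via the pushforward of $d\mu$.
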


\begin{proof}
	
	By the definition of $r_\kappa$, we firstly compute
	\begin{equation}\label{s_kappa: der}
		\begin{aligned}
			r_\kappa'(t)&=-\dfrac{\ak^{-1}}{I_\kappa'\left(r_\kappa(t)\right)}\int_{\partial U_t^i}{\dfrac{d\mu_g}{\abs{\nabla u}} }.
		\end{aligned}
	\end{equation}
	On the one hand, \eqref{s_kappa: der} implies
	\begin{equation}\label{|varphi>l|}
		\begin{aligned}
			\abs{U_t\cap \left\{\varphi>l\right\}}&=\int_{U_t\cap \left\{\psi\left(r_\kappa(u)\right)>l \right\}}{\, dV_g }\\
			&=\int_t^{+\infty}{\chi_{\left\{\psi\left(r_\kappa\right)>l \right\}}\cdot \int_{\partial U_w^i}{\dfrac{d\mu_g}{\abs{\nabla u}} }  \, dw}\\
			&=-\int_0^{r_\kappa(t)}{\dfrac{\chi_{\left\{\psi>l\right\} }}{r_\kappa' \left(r_\kappa^{-1}(\sigma)\right)}\cdot \int_{\partial U_{r_\kappa^{-1}(\sigma)}^i}{\dfrac{d\mu_g}{\abs{\nabla u}}}  \, d\sigma}\\
			&=\ak\int_0^{r_\kappa(t)}{\chi_{\left\{\psi>l\right\} }\cdot  I_\kappa'(\sigma) \, d\sigma}.
		\end{aligned}
	\end{equation}
	On the other hand, 
	\begin{equation}\label{|psi>l|}
		\begin{aligned}
			\abs{B_{r_\kappa(t)}^\kappa\cap \left\{\psi>l\right\}}&=\int_{B_{r_\kappa(t)}^\kappa\cap \left\{\psi>l\right\}}{ dV_{g_\kappa} }\\
			&=\int_0^{r_\kappa(t)}{\int_{\partial B_\sigma^\kappa}{ \chi_{\left\{\psi>l\right\}} \, d\mu_{g_\kappa}} \, d\sigma }\\
			&=\int_0^{r_\kappa(t)}{\chi_{\left\{\psi>l\right\} }\cdot  I_\kappa'(\sigma) \, d\sigma}.
		\end{aligned}
	\end{equation}
	Combining \eqref{|varphi>l|} and \eqref{|psi>l|}, we obtain
	\begin{equation*}
		\abs{U_t\cap \left\{\varphi>l\right\}}=\ak \abs{B_{r_\kappa(t)}^\kappa\cap \left\{\psi>l\right\}}.
	\end{equation*}
	Therefore,
	\begin{equation*}
		\begin{aligned}
			||\varphi||_{L^p\left(U_t\right)}^p&=p\int_0^{+\infty}{\abs{U_t\cap \left\{\varphi>l\right\}} \cdot l^{p-1} \, dl}\\
			&= \ak\cdot p\int_0^{+\infty}{\abs{B_{r_\kappa(t)}^\kappa\cap \left\{\psi>l\right\}}\cdot l^{p-1} \, dl}=\ak||\psi||_{L^p\left(B_{r_\kappa(t)}^\kappa\right)}^p.
		\end{aligned}
	\end{equation*}
	
\end{proof}

\section{Proofs of Theorem \ref{thm: 1} and Theorem \ref{thm: 2}}\label{Proof of Theorem 1.1}

\begin{proof}[Proof of Theorem \ref{thm: 1}]
	Let $u$ and $v$ be the eigenfunctions associated with $\lambda_{p}(\Omega,\beta)$ and $\lambda_{p}\left(\Omega^\sharp,\beta\right)$, respectively, where $\Omega^\sharp$ denotes the geodesic ball in $\left(M_\kappa, g_\kappa\right)$ satisfying $\abs{\Omega}=\ak \abs{\Omega^\sharp}$. Assume that $||u||_{L^\infty(\Omega)}=||v||_{L^\infty\left(\Omega^\sharp\right)}=1$. By Lemma \ref{monotonicity equiv upper bound}, we obtain
	\begin{equation}\label{5.2}
		\dfrac{\abs{\nabla v}}{v}\leq \beta^{\frac{1}{p-1}}, \quad \text{in}~ \Omega^\sharp.
	\end{equation}
	Following \eqref{varphi}, define $\varphi:  \Omega\rightarrow \mathbb{R}$ by
	\begin{equation*}
		\varphi(q) =\dfrac{\abs{\nabla v}}{v}(r_\kappa(u(q))),
	\end{equation*}
	where $r_\kappa(t) =I_\kappa^{-1}\left(\ak^{-1}\abs{U_t}\right)$ and $I_\kappa(r) =n\omega_n\int_0^r{\zeta_\kappa^{n-1}(w) \,dw}$. By Lemma \ref{E geq H}, there exists $\tilde{t}\in \left(0,1\right)$ such that
	\begin{equation}\label{5.3}
		\lambda_{p}(\Omega,\beta) \geq H_\Omega(\tilde{t},\varphi).
	\end{equation}
	Combining \eqref{isoperi}, Lemma \ref{varphi equiv psi} and \eqref{5.2}, we obtain
	\begin{equation*}
		\begin{aligned}
			H_\Omega(\tilde{t},\varphi)&=\dfrac{1}{\abs{U_{\tilde{t}}}}\left(\int_{\partial U_{\tilde{t}}^i}{\abs{\varphi}^{p-1} \,d\mu_g}-(p-1)\int_{U_{\tilde{t}}}{\abs{\varphi}^p \,dV_g}+\beta\abs{\partial U_{\tilde{t}}^e}\right)\\
			&\geq \dfrac{1}{\abs{U_{\tilde{t}}}}\left[\int_{\partial U_{\tilde{t}}^i}{\left(\dfrac{\abs{\nabla v}}{v}\Bigg|_{\partial B_{r_\kappa\left(\tilde{t}\right)}^\kappa}\right)^{p-1} \,d\mu_g}-(p-1)\ak\int_{B_{r_\kappa\left(\tilde{t}\right)}^\kappa}{\left(\dfrac{\abs{\nabla v}}{v}\right)^p \,dV_{g_\kappa}}+\left(\dfrac{\abs{\nabla v}}{v}\Bigg|_{\partial B_{r_\kappa\left(\tilde{t}\right)}^\kappa}\right)^{p-1}  \abs{\partial U_{\tilde{t}}^e}\right]\\
			&=\dfrac{1}{\abs{U_{\tilde{t}}}}\left[\left(\dfrac{\abs{\nabla v}}{v}\Bigg|_{\partial B_{r_\kappa\left(\tilde{t}\right)}^\kappa}\right)^{p-1} \abs{\partial U_{\tilde{t}}^i}-(p-1)\ak\int_{B_{r_\kappa\left(\tilde{t}\right)}^\kappa}{\left(\dfrac{\abs{\nabla v}}{v}\right)^p \,dV_{g_\kappa}}+\left(\dfrac{\abs{\nabla v}}{v}\Bigg|_{\partial B_{r_\kappa\left(\tilde{t}\right)}^\kappa}\right)^{p-1}  \abs{\partial U_{\tilde{t}}^e}\right]\\
			&=\dfrac{1}{\abs{U_{\tilde{t}}}}\left[\left(\dfrac{\abs{\nabla v}}{v}\Bigg|_{\partial B_{r_\kappa\left(\tilde{t}\right)}^\kappa}\right)^{p-1} \abs{\partial U_{\tilde{t}}}-(p-1)\ak\int_{B_{r_\kappa\left(\tilde{t}\right)}^\kappa}{\left(\dfrac{\abs{\nabla v}}{v}\right)^p \,dV_{g_\kappa}}\right]\\
			&\geq \dfrac{1}{\abs{B_{r_\kappa\left(\tilde{t}\right)}}}\left[\left(\dfrac{\abs{\nabla v}}{v}\Bigg|_{\partial B_{r_\kappa\left(\tilde{t}\right)}^\kappa}\right)^{p-1} \abs{\partial B_{r_\kappa\left(\tilde{t}\right)}^\kappa}-(p-1)\int_{B_{r_\kappa\left(\tilde{t}\right)}^\kappa}{\left(\dfrac{\abs{\nabla v}}{v}\right)^p \,dV_{g_\kappa}}\right]\\
			&=\dfrac{1}{\abs{B_{r_\kappa\left(\tilde{t}\right)}}}\left(\int_{\partial B_{r_\kappa\left(\tilde{t}\right)}^\kappa}{\left(\dfrac{\abs{\nabla v}}{v}\right)^{p-1} \,d\mu_{g_\kappa}}-(p-1)\int_{B_{r_\kappa\left(\tilde{t}\right)}^\kappa}{\left(\dfrac{\abs{\nabla v}}{v}\right)^p \,dV_{g_\kappa}}\right)\\
			&=H_B\left(v\left(r_\kappa\left(\tilde{t}\right)\right),\dfrac{\abs{\nabla v}}{v}\right).
		\end{aligned}
	\end{equation*}
	Equation \eqref{E=H: radial} implies
	\begin{equation}\label{5.4}
		\begin{aligned}
			H_\Omega\left(\tilde{t},\varphi\right)&\geq H_B\left(v\left(r_\kappa\left(\tilde{t}\right)\right),\dfrac{\abs{\nabla v}}{v}\right)=\lambda_{p}\left(\Omega^\sharp,\beta\right).
		\end{aligned}
	\end{equation}
	Thus, by \eqref{5.3} and \eqref{5.4}, it follows that
	\begin{equation*}
		\lambda_{p}(\Omega,\beta) \geq \lambda_{p}\left(\Omega^\sharp,\beta\right).
	\end{equation*}

	If equality holds, we first show that for all $t\in \left(0,1\right)$,
	\begin{equation}\label{E leq H} 
		\lambda_{p}(\Omega,\beta)\leq H_\Omega(t,\varphi).
	\end{equation}
	Suppose, for contradiction, that there exists $t'\in (0,1)$ such that 
	\begin{equation*}
		\lambda_{p}(\Omega,\beta)> H_\Omega\left(t',\varphi\right).
	\end{equation*}
	Replacing $\tilde{t}$ by $t'$ in the procedure above, it follows that
	\begin{equation*}
		\begin{aligned}
			\lambda_{p}(\Omega,\beta)&> H_\Omega\left(t',\varphi\right)\\
			&\geq H_B\left(v\left(r_\kappa\left(t'\right)\right),\dfrac{\abs{\nabla v}}{v}\right)\\
			&=\lambda_{p}\left(\Omega^\sharp,\beta\right),
		\end{aligned}
	\end{equation*} 
	which leads to a contradiction. Thus, \eqref{E leq H} holds, and by Lemma \ref{E geq H}, we deduce
	\begin{equation}\label{5.6}
		\dfrac{\abs{\nabla u}}{u}=\varphi, \quad \text{almost everywhere in}~\Omega.
	\end{equation}
	Consequently, Lemma \ref{E=H} and \eqref{5.6} imply that for almost every $t\in (0,1)$,
	\begin{equation*}
		H_\Omega(t,\varphi)=H_\Omega\left(t,\dfrac{\abs{\nabla u}}{u}\right)=\lambda_{p}(\Omega,\beta)=\lambda_{p}\left(\Omega^\sharp,\beta\right),
	\end{equation*} 
	which implies that \eqref{5.4} holds with equality for almost every $t\in (0,1)$. Revisiting the proof of \eqref{5.4}, it can be deduced that
	\begin{equation}\label{rigid}
		\abs{\partial U_t}=\ak \abs{\partial B_{r_\kappa(t)}^\kappa}, \quad \text{for~almost~every~}t\in (0,1).
	\end{equation}
	The rigidity of the isoperimetric inequality \eqref{isoperi} then implies that, under an isometry $\Psi$, $(M,g)$ is isometric to $\left(M_\kappa,g_\kappa\right)$ and $U_t$ is isometric to geodesic balls in $\left(M_\kappa,g_\kappa\right)$ for almost every $t\in (0,1)$.
	
	For $\ton{M,g}$ satisfying Assumption I, the arguments of the centers of $\Psi\ton{U_t}$, which are independent of $t$, reduce to the results established in \cite{CW25,AC23}. For the equality case under Assumption II, our analysis adopts the arguments from \cite{CW25,AC23}.
	
	For the model space $\mathbb{S}^n(1)$, we denote by $ x(t)$ the center of $\Psi(U_t)$, by $r(t)$ the geodesic radius of $\Psi(U_t)$ and by $\xi(t) \in T_{x(t)}\mathbb{S}^n(1)$ an arbitrary unit tangent vector. Then the position vector
$$
X(t) = \cos(r(t))x(t) + \sin( r(t))\xi(t) \in \partial \Psi(U_t)
$$
satisfies
$$
u(X(t)) = t.
$$
Differentiating both sides with respect to $t$, one has
\begin{align}\label{diff u}
 \left\langle \nabla u(X(t)),  -r'(t) \sin(r(t))x(t) + \cos(r(t))x'(t)+ r'(t) \cos(r(t))\xi(t) + \sin (r(t))\xi'(t) \right\rangle =1.
\end{align}
Noticing that
\begin{align*}
   &\left\langle x(t), x(t) \right\rangle = 1, \quad \left\langle \xi(t), \xi(t) \right\rangle = 1, \quad \left\langle x(t), \xi(t) \right\rangle = 0, \\
   &\langle x'(t), x(t) \rangle = 0, \quad \langle \xi'(t), \xi(t) \rangle = 0, \quad \nabla u(X(t)) = -|\nabla u(X(t))| (-\sin(r(t))x(t) + \cos (r(t))\xi(t)).
\end{align*}
From \eqref{diff u}, we have
    \begin{align}\label{Simp diff u}
-\abs{\nabla u(X(t))}r'(t) - \abs{\nabla u(X(t))} \langle x'(t), \xi(t) \rangle = 1. 
\end{align}
The isoperimetric inequality guarantees that  $\Psi(U_t) $ are geodesic balls in $\mathbb{S}^n(1)$ for almost every $t \in (0, 1) $, thus
$$
\abs{\Psi(U_t)} = \int_0^{r(t)} n\omega_n \sin^{n - 1} s \, ds.
$$
Differentiating both sides with respect to $t$ yields
$$
- \int_{\partial \Psi(U_t)} \dfrac{1}{|\nabla u|} d\mu_g = r'(t) n\omega_n \sin^{n - 1}\ton{r(t)},
$$
that is,
\begin{align}\label{C(t)}
    - \frac{1}{|\nabla u(X(t))|} = r'(t).
\end{align}
Substituting \eqref{C(t)} into \eqref{Simp diff u}, it follows that
$$
\langle x'(t), \xi(t) \rangle = 0.
$$
Since $x'(t) \in T_{x(t)} \mathbb{S}^n(1)$, and $\xi(t) \in T_{x(t)} \mathbb{S}^n(1)$ is chosen arbitrarily, we have
$$
x'(t) = 0,$$
which implies that the center of $\Psi(U_t)$ is independent of $t$. Let $\left\{t_n\right\}$ be a strictly increasing sequence satisfying \eqref{rigid}, with $t_1=0$ and $\lim\limits_{n\rightarrow +\infty}{t_n }=1$. Then
$$
\Psi(\{u > 0\}) = \Psi\left( \bigcup_n U_{t_n} \right) = \bigcup_n \Psi(U_{t_n}),
$$
that is, $\Psi(\{u > 0\})$ is a nested intersection and union of concentric geodesic balls in $ (M_\kappa, g_\kappa) $. Therefore, $\Omega$ is isometric to a geodesic ball in $\left(M_\kappa,g_\kappa\right)$.

\end{proof}

\begin{proof}[Proof of Theorem \ref{thm: 2}]
	The proof is analogous to that of Theorem \ref{thm: 1}, with the modification that the isoperimetric inequality \eqref{isoperi1} is used in place of \eqref{isoperi}.
	
\end{proof}

\providecommand{\bysame}{\leavevmode\hbox
	to3em{\hrulefill}\thinspace}
	
	\bibliographystyle{plain}
	
	\bibliography{p-laplace-ref}

\vspace{1cm}

\begin{flushleft}
	Daguang Chen,
	E-mail: dgchen@tsinghua.edu.cn\\
	Shan Li,
	E-mail: lishan22@mails.tsinghua.edu.cn\\
	Yilun  Wei,
	E-mail:	weiyl19@mails.tsinghua.edu.cn\\
	Department of Mathematical Sciences, Tsinghua University, Beijing, 100084, P.R. China 	
	
\end{flushleft}

\end{document}